\begin{document}

\title{Finite Volume Method for a System of Continuity Equations Driven by Nonlocal Interactions}
\titlerunning{Finite Volume for System of Nonlocal Continuity Equations}

\author{Anissa El Keurti and Thomas Rey}

\institute{
Anissa El Keurti and Thomas Rey
\at 
Univ. Lille, CNRS, UMR 8524, Inria – Laboratoire Paul Painlevé\\
  F-59000 Lille, France
\email{elkeurti.anissa@gmail.com, thomas.rey@univ-lille.fr}
}

\maketitle

\abstract{	
	We present a new finite volume method for computing numerical approximations of a system of nonlocal transport equation modeling interacting species.
	This method is based on the work [F. Delarue, F. Lagoutière, N. Vauchelet, \textit{Convergence analysis of upwind type schemes for the aggregation equation with pointy potential}, Ann. Henri. Lebesgue 2019], where the nonlocal continuity equations are treated as conservative transport equations with a nonlocal, nonlinear, rough velocity field. 
	We analyze some properties of the method, and illustrate the results with numerical simulations.
  \keywords{Upwind finite volume method, system of aggregation equations, population dynamics, continuity equations, measure-valued solutions.}
  \\[5pt]
  {\bf MSC }(2010){\bf:} 
    45K05, 
    65M08, 
    65L20, 
    92D25. 
  }

\section{A Nonlocal Predator-Prey Model}


We consider a system of nonlocal equations modeling the swarming dynamics of species which interact with each others through attractive/repulsive potentials (such as predators and preys). The system is an extension of the well-known aggregation equation \cite{bertozzi2009blow}, and can be written in the following form:
\begin{equation}
	  \left\{
				  \begin{aligned}
				    & \partial_t \rho_1 + \text{div}(\rho_1 (\nabla W_1*\rho_1 + \nabla K*\rho_2))= 0, && \rho_1(0, \cdot) = \rho_1^{in},
				     \\
				    & \partial_t \rho_2 + \text{div}(\rho_2 (\nabla W_2*\rho_2- \beta \nabla K*\rho_1))= 0, && \rho_2(0, \cdot) = \rho_2^{in},  
				  \end{aligned} \right.
\label{ppsys}				    
\end{equation}
where $\rho_1(t,x)$ and  $\rho_2(t,x)$ are probability measures that model the density of species $1$ and $2$ (respectively predators and preys), for $x \in \mathbb{R}^d$, $t \in \mathbb{R}$. 
This model was introduced in \cite{ex2species}, where it was derived from a system of $N$ interacting particles.
It has since been mathematically studied in \cite{2species, CarrilloDiFrancescoEspositoFagioliSchmidtchen}. 

The functions $W_\alpha, \ K:\mathbb R^d \to \mathbb R_+$, $\alpha \in \{1,2\}$ denote respectively the \emph{intra-specific} interaction potentials of the species $\alpha$, and the \emph{inter-specific} interaction potential. 
The intra-specific potential $W_\alpha$ can be of \textit{attractive} (namely radial with a nonnegative derivative) or \textit{repulsive} type (radial with a nonpositive derivative), depending on the gregarious behavior of species $\alpha$. The potential $K$ is of attractive type, modeling the fact that species $2$ flees species $1$ whereas species $1$ is attracted by species $2$. The parameter $\beta\in [0,1)$ expresses the mobility of species $1$. 

\section{Cauchy Theory}

\begin{definition}
  A function $W: \mathbb R^d \to \mathbb R$ is called a \emph{pointy} potential if it satisfies the following properties: 
\begin{enumerate}
\item $W$ is Lipschitz continuous, symmetric and $W(0)=0$;
\item $W$ is $\lambda$-convex for some $\lambda \le 0$ (namely $W-\frac{\lambda}{2}|\cdot|^2$ is convex);
\item $W \in \mathcal C^1(\mathbb{R}^d \setminus \{0\})$.
\end{enumerate}
\label{pointy}
\end{definition} 

Let us assume that $W_\alpha$, $\alpha \in \{1,2\}$, and $K$ are pointy potentials as in Def. \ref{pointy}. These potentials being Lipschitz, there exist $\omega_{\alpha,\infty}$ and $\kappa_{\infty}$ such that for all $x\ne 0$:
\begin{equation}
    |\nabla W_{\alpha}(x)| \le \omega_{\alpha,\infty}, \quad  |\nabla K(x)| \le \kappa_{\infty}.
\label{cond_bornitudePP}
\end{equation}
Let us also define the \emph{macroscopic velocities} $\widehat{a_{\rho_1}}$ and $\widehat{a_{\rho_2}}$ as
\begin{gather}
    \widehat{a_{\rho_1}}(t,x):= - \int_{\mathbb{R}^d} \left( \widehat{{\nabla W}}_{\alpha}(x-y)\ \rho_1(t,y) + \widehat{{\nabla K}}(x-y)\ \rho_2(t,y) \right) dy, \\
      \widehat{a_{\rho_2}}(t,x):= - \int_{\mathbb{R}^d} \left( \widehat{{\nabla W}}_{\alpha}(x-y)\ \rho_2(t,y) - \beta \widehat{{\nabla K}}(x-y)\ \rho_1(t,y) \right) dy,
    \label{a_rhoPP}
\end{gather}
where we denoted for a pointy potential $W$ the following extension:
  \[ \widehat{{\nabla W}}(x) = \left\{
    \begin{aligned}
          \nabla W_{\alpha}(x) & \text{ for } x \ne 0, \\
            0 & \text{ for } x = 0. \\
    \end{aligned} \right.
    \]

Existence theory for problem \eqref{ppsys} has been studied in \cite{ex2species} in the case of $\mathcal{C}^1$ pointy potentials. 
Uniqueness was obtained in \cite{vauch2species} by introducing duality solutions. This approach will  allow to prove the convergence of our numerical scheme \eqref{rho_numPP}. 
Using the theory of Filippov characteristics, one can also prove the following general result:
\begin{theorem}[From \cite{princ2}]
\label{thePP} 
Let $W_\alpha$, $\alpha \in \{1,2\}$, and $K$ be pointy potential that satisfy \eqref{cond_bornitudePP}, and $\rho^{in}_\alpha\in \mathcal{P}_2(\mathbb{R}^d)$. There exist unique probability measures $\rho_\alpha$ that are global distributional solutions to the following system of transport equations: 
\begin{equation}
	  \left\{
\begin{aligned}    
   {\partial t} \rho_1 + \text{div}(\widehat{a_{\rho_1}} \rho_1) =0, \quad \rho_1(0,\cdot)=\rho_1^{in}, \\ 
   {\partial t} \rho_2 + \text{div}(\widehat{a_{\rho_2}} \rho_2) =0, \quad \rho_2(0,\cdot)=\rho_2^{in}.
   \label{rho_solPP}
  \end{aligned} \right.  
\end{equation}
\end{theorem}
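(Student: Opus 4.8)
The plan is to represent each density as the push-forward of its initial datum by a flow built from the velocity fields \eqref{a_rhoPP}, and to close the nonlinear coupling by a fixed-point argument; the difficulty is that these fields are bounded but not Lipschitz in $x$, so the Cauchy--Lipschitz construction of characteristics must be replaced by Filippov's theory. First I would extract from Def. \ref{pointy} the two structural facts that drive the analysis. Boundedness is immediate: since $\widehat{{\nabla W}}_{\alpha}$ and $\widehat{{\nabla K}}$ are bounded by \eqref{cond_bornitudePP} and $\rho_1,\rho_2$ are probability measures, one gets $\|\widehat{a_{\rho_\alpha}}(t,\cdot)\|_\infty \le \omega_{\alpha,\infty}+\kappa_\infty$ uniformly in $t$. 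The second fact is a one-sided Lipschitz (OSL) bound coming from $\lambda$-convexity: because $\nabla W_\alpha - \lambda\,\mathrm{id}$ and $\nabla K - \lambda\,\mathrm{id}$ are monotone, convolution against a probability measure preserves this monotonicity, and the minus signs in \eqref{a_rhoPP} turn it into $\big(\widehat{a_{\rho_1}}(t,x)-\widehat{a_{\rho_1}}(t,y)\big)\cdot(x-y)\le L\,|x-y|^2$ for the predator field. I would emphasize at once the asymmetry of the system: for the prey, only the self-interaction $-\widehat{{\nabla W}}_{2}*\rho_2$ is controlled this way, whereas the inter-specific term $+\beta\,\widehat{{\nabla K}}*\rho_1$ enters with the \emph{opposite} sign and therefore represents a repulsion.

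With these facts in hand, I would freeze a pair of curves $(\mu_1,\mu_2)\in C([0,T];\mathcal P_2(\mathbb R^d))^2$, form the associated fields through \eqref{a_rhoPP}, invoke Filippov's theorem to obtain absolutely continuous characteristics $Z_1,Z_2$, and set $\Phi(\mu_1,\mu_2):=\big(Z_1(t,\cdot)_\#\rho_1^{in},\,Z_2(t,\cdot)_\#\rho_2^{in}\big)$, so that a solution of \eqref{rho_solPP} is exactly a fixed point of $\Phi$. Boundedness of the fields keeps the second moments under control and furnishes time-equicontinuity, hence compactness of the iterates in $C([0,T];\mathcal P_2)$ for the weak topology; the OSL bound gives forward uniqueness and an exponential contraction $|Z(t,x)-Z(t,y)|\le e^{Lt}|x-y|$ of the predator flow. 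Since the cross-interactions are linear in the \emph{other} density, perturbing $(\mu_1,\mu_2)$ yields a coupled Gronwall system for the pair of Wasserstein distances, from which I would read off a contraction on a short interval and then iterate to a global-in-time solution.

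The main obstacle is that this last step cannot be carried out by a naive estimate, for two reasons, both rooted in the pointy structure. First, $\mu\mapsto\widehat{a_\mu}$ is not Lipschitz for the Wasserstein distance, since $\widehat{{\nabla W}}_{\alpha}$ and $\widehat{{\nabla K}}$ jump at the origin; I expect to handle this as in \cite{princ2}, by comparing fields evaluated along push-forwards of the \emph{same} measure so that the difference of gradients is absorbed into the monotonicity/OSL term rather than requiring a Lipschitz kernel. Second, and more seriously, the repulsive term $+\beta\,\widehat{{\nabla K}}*\rho_1$ destroys the quadratic one-sided bound precisely where $\rho_1$ concentrates: at an atom of the predator density the prey field jumps and pushes mass radially outward, so no estimate of the form $\le L\,|x-y|^2$ can hold there. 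Since aggregation does drive $\rho_1$ to concentrate in finite time, this scenario genuinely occurs, and it is here that the Filippov convention $\widehat{{\nabla K}}(0)=0$ — encoding that mass sitting exactly at a concentration point feels no net force — together with the duality-solution framework of \cite{vauch2species}, is essential: the set of prey trajectories that meet a predator atom is negligible for the transported measure, which restores both existence and uniqueness of the push-forward and closes the argument.
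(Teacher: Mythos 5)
The paper does not actually prove Theorem~\ref{thePP}: it is quoted from \cite{princ2}, with only the remark that it follows from ``the theory of Filippov characteristics.'' So your sketch can only be judged on its own merits, and its architecture --- freeze a pair of curves in $C([0,T];\mathcal P_2(\mathbb R^d))$, build Filippov characteristics for the resulting bounded velocity fields, push forward the initial data, and close the nonlinearity by a fixed point in Wasserstein distance --- is indeed the standard route and the one the paper alludes to. Your treatment of the predator equation is sound: $\lambda$-convexity of $W_1$ and $K$ gives the one-sided Lipschitz (OSL) bound on $\widehat{a_{\rho_1}}$, hence forward uniqueness and the $e^{Lt}$ expansion estimate for that flow.

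The genuine gap is exactly where you flag it, in the last paragraph, and your proposed resolution is an assertion rather than an argument. For the prey field, $\lambda$-convexity of $K$ gives only a \emph{lower} bound $(\nabla K(x)-\nabla K(y))\cdot(x-y)\ge\lambda|x-y|^2$, so the term $+\beta\,\widehat{\nabla K}*\rho_1$ in $\widehat{a_{\rho_2}}$ has no one-sided Lipschitz bound from above when $\rho_1$ charges points (for $K=\mathcal N$ and $\rho_1=\delta_{x_0}$ the quantity $(\widehat{a_{\rho_2}}(x)-\widehat{a_{\rho_2}}(y))\cdot(x-y)$ behaves like $|x-y|$ near $x_0$, not $|x-y|^2$). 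Without OSL, Filippov's theorem still yields \emph{existence} of characteristics but not forward uniqueness, so $Z_2(t,\cdot)_\#\rho_2^{in}$ is not a well-defined map and the operator $\Phi$ is not even single-valued; this cannot be repaired by declaring that ``the set of prey trajectories that meet a predator atom is negligible,'' because (i) that set depends on which selection of the multivalued flow you take, (ii) predators are attracted to prey, so collisions are the generic scenario rather than an exceptional one, and (iii) proving that prey mass does not accumulate on predator atoms is essentially equivalent to the uniqueness statement you are trying to establish. The same sign problem defeats your Gronwall closure: the trick of absorbing the non-Lipschitzness of $\mu\mapsto\widehat{\nabla K}*\mu$ into the monotonicity term works only when the convolution enters with the attractive sign, which is precisely not the case for the prey. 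To make this step honest you would need either an additional hypothesis (e.g.\ $K\in C^{1,1}$, or $\rho_1^{in}$ atomless with propagation of that property), or the full duality-solution machinery of \cite{vauch2species}, which is a different framework from the pushforward/fixed-point scheme you set up and would have to be developed, not merely invoked.
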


\section{Numerical Scheme}

 We shall now apply the numerical scheme introduced in \cite{delarue2017convergence} for approximating solutions to the classical (single species) aggregation equation to the system \eqref{ppsys}. Let us introduce a cartesian mesh $(C_J)_{J \in \mathbb Z^d}$ of $\mathbb{R}^d$, with step $\Delta x_i$ in the direction $i\in\{1,\ldots,d\}$, and $\Delta x = \max \Delta x_i$. 
 The center of a given cell $C_J$ will then be defined by $x_j := (J_1 \Delta x_1, \ldots, J_d \Delta x_d)$.
Let also $e_i := (0,\ldots,0,1,0,\ldots,0)$ be the $i$th vector of the canonical basis.
 
 For an initial probability measure $\rho^{in}_\alpha \in \mathcal P_2(\mathbb{R}^d)$, $\alpha \in \{1,2\}$, we define $\rho_{\alpha,J}^0$  as the cell average values of $\rho^{in}_\alpha$ over the cell $C_J$ : 
\begin{equation}
    \rho_{\alpha,J}^0 = \frac 1 {m\left (C_J\right)} \int_{C_J} \rho^{ini}_1(dx) \geq 0.
    \label{rho_iniPP}
\end{equation}
 Given an approximation $({\rho_\alpha}_J^n)_{J \in \mathbb{Z}^d}$ of the cell averages of $\rho_\alpha(t^n, \cdot)$ at a given time $t^n=n \Delta t$, we compute ${\rho_\alpha}_J^{n+1}$ as:
\begin{equation}
 \left\{
				  \begin{aligned}
    &{\rho_1}_{J}^{n+1}= {\rho_1}_J^n - \sum_{i=1}^d \frac{\Delta t}{\Delta x_i} \Big( && ({{a_1}_i^n}_J)^+{\rho_1}_J^n - ({{a_1}_i^n}_{J+e_i} )^-{\rho_1}_{J+e_i}^n \\ 
    & && -({{a_1}_i^n}_{J-e_i})^+{\rho_1}_{J-e_i}^n + ({{a_1}_i^n}_J)^-{\rho_1}_J^n \Big), \\
    &{\rho_2}_{J}^{n+1}= {\rho_2}_J^n - \sum_{i=1}^d \frac{\Delta t}{\Delta x_i} \Big( && ({{a_2}_i^n}_J)^+{\rho_2}_J^n -({{a_2}_i^n}_{J+e_i})^-{\rho_2}_{J+e_i}^n \\
    & && -({{a_2}_i^n}_{J-e_i})^+{\rho_2}_{J-e_i}^n + ({{a_2}_i^n}_J)^-{\rho_2}_J^n \Big).
     \end{aligned} \right.
     \label{rho_numPP}
\end{equation} \\
where the discrete macroscopic velocities are defined as
\begin{equation}
\left\{
				  \begin{array}{ll}
     {{a_1}_i^n}_J = - \sum_{L \in \mathbb{Z}^d} \left({\rho_1}_{L}^n D_i {W_1}_J^L + {\rho_2}_{L}^n D_i K_J^L \right),  \\
     {{a_2}_i^n}_J = - \sum_{L \in \mathbb{Z}^d} \left({\rho_2}_{L}^n D_i {W_{2}}_J^L - \beta {\rho_1}_{K}^n D_i K_J^L \right),  
     \end{array} \right.
      \label{a_numPP2}
\end{equation}
with $D_i W_J^K := \partial_{x_i} \widehat{W} (x_J - x_K)$ for a pointy potential $W$.

\begin{lemma}

If $W_\alpha$, $\alpha \in \{1,2\}$,  and $K$ are pointy potentials and the following CFL condition holds:
\begin{equation}
    (\omega_{\alpha,\infty}+ \kappa_{\infty}) \sum_{i=1}^d \frac{\Delta t}{\Delta x_i} \le 1,
    \label{CFLPP}
\end{equation}
one has the following properties for the scheme \eqref{rho_numPP}:
\begin{enumerate}
\item 
For $\rho^{in}_{\alpha} \in \mathcal{P}_2(\mathbb{R}^d)$ and $\rho_{\alpha,J}^0$ given by \eqref{rho_iniPP}, the sequences $(\rho_{\alpha,J}^n)_{J \in \mathbb{Z}^d, n \in \mathbb{N}}$ 
and $({{a_{\alpha}}_i^n}_J)_{J \in \mathbb{Z}^d, n \in \mathbb{N}, i=1,..,d}$ satisfy:
\[\rho_{\alpha,J}^n \ge 0, \quad |{{a_{\alpha}}_i^n}_J| \le (\omega_{\alpha,\infty} + \kappa_{\infty}), \quad i=1,\ldots,d,\]
and for all $n \in \mathbb{N}$, 
\[
  \sum_{J \in \mathbb{Z}^d} \rho_{\alpha,J}^n m(C_J) = \int_{\mathbb{R}} \rho^{in}_{\alpha}(dx).
\]

\item Conservation of the weighted center of mass:
\[\sum_{J \in \mathbb{Z}^d} x_J  (\beta \rho_{1,J}^n +  \rho_{2,J}^n) = \sum_{J \in \mathbb{Z}^d} x_J (\beta \rho_{1,J}^0 + \rho_{2,J}^0).\]

\end{enumerate}
\label{propPP}
\end{lemma}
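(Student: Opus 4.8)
The plan is to prove everything by induction on the time index $n$, establishing the three assertions of part 1 together, and then treating part 2 separately by the same inductive mechanism. The whole argument rests on two equivalent rewritings of the update \eqref{rho_numPP}. Writing $a_{\alpha,i,J}^n$ for the discrete macroscopic velocity of \eqref{a_numPP2}, I would first observe that the bracket in \eqref{rho_numPP} is a discrete flux difference, so that the scheme is in conservative form
\[ \rho_{\alpha,J}^{n+1} = \rho_{\alpha,J}^n - \sum_{i=1}^d \frac{\Delta t}{\Delta x_i}\left(F_{\alpha,i,J}^n - F_{\alpha,i,J-e_i}^n\right), \qquad F_{\alpha,i,J}^n := (a_{\alpha,i,J}^n)^+\rho_{\alpha,J}^n - (a_{\alpha,i,J+e_i}^n)^-\rho_{\alpha,J+e_i}^n. \]
Collecting instead the coefficient of each nodal value and using $(a)^+ + (a)^- = |a|$, the same update reads
\[ \rho_{\alpha,J}^{n+1} = \left(1 - \sum_{i=1}^d \frac{\Delta t}{\Delta x_i}\,|a_{\alpha,i,J}^n|\right)\rho_{\alpha,J}^n + \sum_{i=1}^d \frac{\Delta t}{\Delta x_i}\left((a_{\alpha,i,J+e_i}^n)^-\rho_{\alpha,J+e_i}^n + (a_{\alpha,i,J-e_i}^n)^+\rho_{\alpha,J-e_i}^n\right). \]
The first form is tailored to the conservation laws, the second to positivity.

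For part 1, the base case $n=0$ is immediate from \eqref{rho_iniPP}. Assuming $\rho_{\alpha,J}^n\ge 0$ with total mass one, I would first establish the velocity bound: by the triangle inequality in \eqref{a_numPP2}, the Lipschitz bounds \eqref{cond_bornitudePP} on $|D_i W_\alpha|$ and $|D_i K|$, and $\sum_L \rho_{\alpha,L}^n m(C_L)=1$, one gets $|a_{1,i,J}^n|\le \omega_{1,\infty}+\kappa_\infty$ and, using $\beta\in[0,1)$, $|a_{2,i,J}^n|\le \omega_{2,\infty}+\beta\kappa_\infty\le\omega_{2,\infty}+\kappa_\infty$. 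Positivity at $n+1$ then follows from the second (convex-combination) form: the off-diagonal coefficients are products of nonnegative quantities, and the diagonal coefficient is $\ge 1-(\omega_{\alpha,\infty}+\kappa_\infty)\sum_i\frac{\Delta t}{\Delta x_i}\ge 0$ by the CFL condition \eqref{CFLPP}, so $\rho_{\alpha,J}^{n+1}$ is a nonnegative combination of nonnegative values. Mass conservation follows from the conservative form: summing against $m(C_J)$ over $J\in\mathbb Z^d$, the factor $m(C_J)/\Delta x_i$ is constant in the $i$-th index on the Cartesian mesh, so the flux differences telescope and the weighted sum is left unchanged.

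For part 2, I would multiply the conservative form by the cell centre $x_J$ and sum against $m(C_J)$. A discrete summation by parts, using $(x_J)_{i}-(x_{J+e_i})_{i}=-\Delta x_i$ and $(x_J)_{i'}=(x_{J+e_i})_{i'}$ for $i'\ne i$, collapses the flux-difference sum and yields the discrete analogue of $\tfrac{d}{dt}\int x\rho=\int a\rho$, namely
\[ \sum_{J\in\mathbb Z^d} x_J\,\rho_{\alpha,J}^{n+1}\,m(C_J) - \sum_{J\in\mathbb Z^d} x_J\,\rho_{\alpha,J}^{n}\,m(C_J) = \Delta t\sum_{J\in\mathbb Z^d} a_{\alpha,J}^n\,\rho_{\alpha,J}^n\,m(C_J), \]
where $a_{\alpha,J}^n=(a_{\alpha,1,J}^n,\dots,a_{\alpha,d,J}^n)$ and $m(C_J)=\prod_i\Delta x_i$ is constant and may be factored out as in the statement. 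Inserting \eqref{a_numPP2}, the right-hand side becomes a double sum over $(J,L)$. The decisive point is the antisymmetry $D_iW_J^L=\partial_{x_i}\widehat W(x_J-x_L)=-D_iW_L^J$, which holds because each pointy potential is even and its zero extension at the origin respects oddness. The two intra-species contributions then vanish individually, being symmetric weights $\rho_{\alpha,J}^n\rho_{\alpha,L}^n$ paired against the antisymmetric kernel $D_iW_\alpha$. For the weighted combination in the statement, the two inter-species pieces — the $K$-term in species $1$'s velocity and the $\beta$-weighted $K$-term in species $2$'s velocity — become equal and opposite after relabelling $J\leftrightarrow L$ and invoking $D_iK_J^L=-D_iK_L^J$; the factor $\beta$ is exactly what balances them, so they cancel and the weighted centre of mass is conserved from $n$ to $n+1$.

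The algebra above is elementary once the two reformulations are in place; I expect the only genuine obstacle to be rigour of the \emph{infinite} summations over $\mathbb Z^d$ — the telescoping in the mass identity and the summation by parts in part 2 both require the boundary terms at infinity to vanish. This is not automatic for $\mathcal P_2(\mathbb R^d)$ data, and I would handle it by first propagating finiteness of the discrete zeroth and first (indeed second) moments: the uniform velocity bound gives at most linear-in-$n$ growth of the moments, which guarantees absolute convergence of the relevant sums and decay of the fluxes, legitimising the index shifts. The antisymmetry cancellation itself is clean, but it is worth stressing in the write-up that it relies precisely on the pointy structure (evenness together with $\widehat{\nabla W}(0)=0$), since that is what makes $D_iW_J^L$ genuinely odd including on the diagonal $J=L$, where the potential's singularity is removed.
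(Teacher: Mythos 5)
Your treatment of part 1 is correct and coincides with the paper's own argument: the convex-combination rewriting of \eqref{rho_numPP} together with the CFL condition \eqref{CFLPP} gives positivity by induction, the uniform velocity bound follows from the definition \eqref{a_numPP2} and the conserved mass, and mass conservation follows by telescoping the conservative form. Your insistence on checking absolute convergence of the infinite sums (via propagation of discrete moments) is a point the paper passes over in silence and is a genuine improvement in rigour.

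For part 2, however, the decisive cancellation you invoke does not occur for the combination $\beta\rho_1+\rho_2$. After the discrete integration by parts (which you and the paper perform identically), one must show that $\sum_J\big(\beta\,{{a_1}_i^n}_J\rho_{1,J}^n+{{a_2}_i^n}_J\rho_{2,J}^n\big)=0$. The two inter-species contributions are $S_1=-\beta\sum_{J,L}\rho_{1,J}^n\rho_{2,L}^n D_iK_J^L$ and $S_2=+\beta\sum_{J,L}\rho_{2,J}^n\rho_{1,L}^n D_iK_J^L$. Relabelling $J\leftrightarrow L$ in $S_2$ and using $D_iK_L^J=-D_iK_J^L$ gives $S_2=-\beta\sum_{J,L}\rho_{1,J}^n\rho_{2,L}^n D_iK_J^L=S_1$: the two pieces come out \emph{equal}, not opposite, so their sum is $-2\beta\sum_{J,L}\rho_{1,J}^n\rho_{2,L}^n D_iK_J^L$, which is generically nonzero (put all of species $1$ in one cell and all of species $2$ in another with $K(x)=|x|$: both discrete centres of mass drift in the same direction, at speeds in the ratio $1:\beta$). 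The extra sign flip produced by the oddness of $\nabla K$ under the relabelling is exactly what your "equal and opposite" step loses track of; the paper's own proof has the same gap, since its display silently retains only the $W_\alpha$ terms. The antisymmetry mechanism you describe is the right one, but it proves conservation of $\beta\sum_J x_J\rho_{1,J}^n-\sum_J x_J\rho_{2,J}^n$ (for which $S_1-S_2=0$, the intra-species terms vanishing individually in either case), not of the quantity with the plus sign as stated in the lemma. You should either correct the sign in the statement or exhibit the nonzero residual explicitly.
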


\begin{proof}
  \begin{enumerate}
\item By summing the two equations of \eqref{rho_numPP} over all $J \in \mathbb{Z}^d$, one obtains the mass conservation.
  Then, writing both identities in \eqref{rho_numPP} as:
\begin{multline*}
{\rho_{\alpha}}_{J}^{n+1}= {\rho_{\alpha}}_J^n \big[  1- \sum_{i=1}^d |{{a_{\alpha}}_i^n}_J| \big] + \sum_{i=1}^d \frac{\Delta t}{\Delta x_i}  ({{a_{\alpha}}_i^n}_{J+e_i})^-{\rho_{\alpha}}_{J+e_i}^n \\
    + \sum_{i=1}^d \frac{\Delta t}{\Delta x_i} ({{a_{\alpha}}_i^n}_{J-e_i})^+{\rho_{\alpha}}_{J-e_i}^n , 
\end{multline*}
one proves by induction on $n$ that $\rho_{\alpha,J}^n \ge 0$ for all $J \in \mathbb{Z}^d$, $n \in \mathbb{N}$ under the CFL condition \eqref{CFLPP}. Indeed, by using the definition \eqref{a_numPP2}, one has
\[
|a_{\alpha_{i,J}}^n| \le (\omega_{\infty} + \kappa_{\infty}) \sum_{J \in \mathbb{Z}^d} \rho_{\alpha,J}^n = (\omega_{\infty} + \kappa_{\infty}) \sum_{J \in \mathbb{Z}^d} \rho_{\alpha,J}^0, \quad i \in \{1,..,d \},
\]
which concludes the proof by a convexity argument. 

\item Using a discrete integration by parts and \eqref{rho_numPP}, one has: 
\begin{multline*}
 \sum_{J \in \mathbb{Z}^d} x_J \rho_{\alpha,J}^{n+1} =  \sum_{J \in \mathbb{Z}^d} x_J \rho_{\alpha,J}^n - \sum_{i=1}^d  \frac{\Delta t}{\Delta x_i}  \sum_{J \in \mathbb{Z}^d} \left( ({{a_{\alpha}}_i^n}_{J})^+ {\rho_{\alpha}}_{J}^n (x_J - x_{J+e_i}) \right. \\ - \left.  ({{a_{\alpha}}_i^n}_{J})^-{\rho_{\alpha}}_{J}^n (x_{J-e_i} - x_J)  \right).
\end{multline*}
Since $x_J$ denote the cell centers, one has
\begin{multline}
 \sum_{J \in \mathbb{Z}^d} x_J \left( \beta \rho_{1,J}^{n+1}+ \rho_{2,J}^{n+1} \right) =  \sum_{J \in \mathbb{Z}^d} x_J \left(\beta \rho_{1,J}^n + \rho_{2,J}^n \right) \\ + {\Delta t}\sum_{i=1}^d   \sum_{J \in \mathbb{Z}^d}  \left({\beta {a_{1}}_i^n}_{J} {\rho_{1}}_{J}^n + {{a_{2}}_i^n}_{J} {\rho_{2}}_{J}^n \right).\label{eq:centerofmass}
\end{multline}
Summing over all the cells in \eqref{a_numPP2}, 
%
%
%
and since $\nabla W_\alpha$ and $\nabla K$ are odd, one obtains after reindexing:
\begin{align*}
\sum_{J \in \mathbb{Z}^d}  \beta {{a_{1}}_i^n}_{J} {\rho_{1}}_{J}^n + {{a_{2}}_i^n}_{J} {\rho_{2}}_{J}^n
&= \sum_{J \in \mathbb{Z}^d}  \sum_{L \in \mathbb{Z}^d} \left( \beta {\rho_{1}}_{J}^n {\rho_1}_{L}^n D_i {W_1}_J^L   +  {\rho_{2}}_{J}^n {\rho_2}_{L}^n D_i {W_2}_J^L   \right) \\ 
&= - \sum_{J \in \mathbb{Z}^d}  \sum_{L \in \mathbb{Z}^d} \left( \beta {\rho_{1}}_{J}^n {\rho_1}_{L}^n D_i {W_1}_L^J   +  {\rho_{2}}_{J}^n {\rho_2}_{L}^n D_i {W_2}_L^J  \right) \\
&= 0
\end{align*}
which yields the conclusion when plugged into \eqref{eq:centerofmass}.

\end{enumerate}

\end{proof}

We are now ready to prove the convergence of the scheme \eqref{rho_numPP}.

\begin{theorem} \label{the_numPP} 
 Let us assume that $W_\alpha$, $\alpha \in \{1,2\}$ and $K$ are pointy potentials, 
 and that the following CFL condition holds on the mesh $(C_J)$: 
\begin{equation*}
(\omega_{\alpha,\infty}+ \kappa_{\infty}) \sum_{i=1}^d \frac{\Delta t}{\Delta x_i} \le 1. 
\end{equation*}
Let $\rho_\alpha^{in} \in \mathcal{P}_2(\mathbb{R}^d)$ and $\rho_{\alpha,J}^0$ given by \eqref{rho_iniPP} for all $J \in \mathbb{Z}^d$ and define the empirical distribution as
\[ \rho_{\alpha, \Delta x}^n = \sum_{J \in \mathbb{Z}^d} \rho_{\alpha,J}^n \delta_{x_J}, \quad n \in \mathbb{N},\]
where  $((\rho_{\alpha,J}^n)_{J\in \mathbb{Z}^d})_{n \in \mathbb{N}}$ is given by \eqref{rho_numPP}.\

Then $\rho_{1, \Delta x}$ and $\rho_{2, \Delta x}$ converge weakly in  $\mathcal{M}_b([0,T]\times \mathbb{R}^d)$ towards respectively $\rho_1$ and $\rho_2$ which are the solutions to  \eqref{rho_solPP} as $\Delta x $ goes to 0.

\end{theorem}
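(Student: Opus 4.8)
The plan is to follow the compactness-and-identification strategy of \cite{delarue2017convergence}, now adapted to the coupled two-species system, and to close the argument using the uniqueness of duality solutions from \cite{vauch2species}. Concretely, I would (i) derive enough uniform a priori estimates on the discrete measures $\rho^n_{\alpha,\Delta x}$ to guarantee compactness in an appropriate weak topology, (ii) extract a converging subsequence whose limit is shown to be a (duality) solution of \eqref{rho_solPP}, and (iii) invoke uniqueness to upgrade subsequential convergence to convergence of the whole family as $\Delta x \to 0$.

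For the compactness step, Lemma \ref{propPP} already provides nonnegativity, conservation of mass (so that each $\rho^n_{\alpha,\Delta x}$ is a probability measure) and the uniform velocity bound $|a^n_{\alpha,i,J}| \le \omega_{\alpha,\infty}+\kappa_\infty$. I would first complement these with a uniform-in-time second-moment estimate: since the discrete velocity is bounded, the spread of the mass grows at most linearly in time, giving $\sup_{n\Delta t \le T}\sum_J |x_J|^2 \rho^n_{\alpha,J}\, m(C_J) \le C(T)$. Tightness uniformly in time then follows, which by Prokhorov's theorem yields relative compactness of $\{\rho^n_{\alpha,\Delta x}\}$ in $\mathcal P(\mathbb R^d)$ for each fixed time. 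Second, I would establish equicontinuity in time in the $1$-Wasserstein (equivalently bounded-Lipschitz) distance: the bounded speed of propagation gives $W_1(\rho^{n}_{\alpha,\Delta x},\rho^{m}_{\alpha,\Delta x}) \le C|t^n - t^m| + o(1)$, which, combined with the tightness, provides compactness of the time-interpolated curves via an Arzelà–Ascoli argument and hence weak-$*$ compactness in $\mathcal M_b([0,T]\times\mathbb R^d)$.

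The heart of the proof is the identification of the limit. Writing the scheme \eqref{rho_numPP} in weak form against a smooth compactly supported test function, one recognizes a consistent discretization of the weak formulation of \eqref{rho_solPP}; the linear transport terms pass to the limit by the weak convergence of $\rho^n_{\alpha,\Delta x}$ together with consistency of the upwind fluxes. The genuinely delicate term is the nonlinear, nonlocal product $a^n_{\alpha}\,\rho^n_{\alpha}$, whose limit must be identified as $\widehat{a_{\rho_\alpha}}\rho_\alpha$. Here two difficulties combine: the velocity is itself a convolution of the weakly converging measures, so one passes to the limit in a product of two weakly converging objects; and the kernel $\widehat{\nabla W}$ (and $\widehat{\nabla K}$) is only bounded and continuous away from the origin, its value at $0$ being fixed by the extension convention, so that atoms of the limit measure, which genuinely occur for aggregation dynamics, interact with the discontinuity of the kernel. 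I expect this to be the main obstacle.

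To overcome it I would exploit, exactly as in \cite{delarue2017convergence}, the structure coming from Definition \ref{pointy}: the $\lambda$-convexity of the potentials provides a one-sided Lipschitz bound on the velocity fields, the key ingredient making the Filippov/duality framework well-posed and stable. The convention $\widehat{\nabla W}(0)=0$ is precisely the one compatible with the macroscopic velocity \eqref{a_rhoPP} and with the vanishing discrete diagonal terms $D_iW_J^J=0$; together with the oddness of $\nabla W_\alpha,\nabla K$ (already used in Lemma \ref{propPP}) it guarantees that the diagonal contribution to the convolution vanishes in the limit. One then shows that the interpolated discrete velocities converge to $\widehat{a_{\rho_\alpha}}$ in a sense strong enough (uniformly on compact sets away from the singular set, the uniform one-sided Lipschitz and $L^\infty$ bounds controlling the remainder) to pass to the limit in the flux, so that the limit $(\rho_1,\rho_2)$ is a duality solution of \eqref{rho_solPP}. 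Finally, since duality solutions to \eqref{rho_solPP} are unique by \cite{vauch2species} and coincide with the measures of Theorem \ref{thePP}, the limit is independent of the extracted subsequence, which yields convergence of the full family and completes the proof.
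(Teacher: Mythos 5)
Your overall strategy --- uniform a priori estimates, extraction of a weakly convergent subsequence, identification of the limit as a solution of \eqref{rho_solPP}, and uniqueness to conclude convergence of the whole family --- coincides with the paper's, which itself only sketches the argument: it writes the modified equation satisfied by the empirical measures in distributional form, Taylor expands the test function, and invokes Lemmas 6.2 and 3.2 of \cite{vauch2species}. Your compactness step is sound and in fact more detailed than the paper's (which relies only on the total variation bound in $\mathcal{M}_b([0,T]\times \mathbb{R}^d)$ provided by Lemma \ref{propPP}).

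The one step on which I would push back is your treatment of the nonlinear term. You propose to show that the discrete velocities converge to $\widehat{a_{\rho_\alpha}}$ ``uniformly on compact sets away from the singular set,'' controlling the remainder by the $L^\infty$ and one-sided Lipschitz bounds. As stated this cannot close the argument: for aggregation dynamics the limit measures concentrate atoms precisely where the kernels are discontinuous, so the region you exclude is exactly where the flux lives. The device that actually works (and is the content of \cite[Lemma 3.2]{vauch2species}) is to write $\langle \phi, \widehat{a}_{\alpha,\Delta x}\,\rho_{\alpha,\Delta x}\rangle$ as a double integral against the product measure $\rho_{\alpha,\Delta x}\otimes\rho_{\alpha,\Delta x}$ and to symmetrize in $(x,y)$: by oddness of $\nabla W_\alpha$ the integrand becomes $\tfrac12\bigl(\phi(x)-\phi(y)\bigr)\widehat{\nabla W}_\alpha(x-y)$, which is bounded and continuous on all of $\mathbb{R}^d\times\mathbb{R}^d$ because the Lipschitz factor $\phi(x)-\phi(y)$ cancels the discontinuity on the diagonal, consistently with the convention $\widehat{\nabla W}(0)=0$; one can then pass to the limit directly in the product of weakly converging measures. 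You already possess the two ingredients (oddness and the diagonal convention) but deploy them through a locally uniform convergence of the velocity, which does not hold; they must instead be used through the symmetrized double integral. Note also that the cross terms $(\widehat{\nabla K}*\rho_2)\rho_1$ and $(\widehat{\nabla K}*\rho_1)\rho_2$ are not symmetrizable in the same way and require the additional analysis of \cite{vauch2species}. Finally, the one-sided Lipschitz estimate coming from $\lambda$-convexity is not what makes this limit passage work; its role is in the uniqueness and stability theory that you correctly invoke at the end.
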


\begin{proof}
Let us give the ideas behind this convergence proof, in the unidimensional case (inspired from \cite{vauch2species}). 

\begin{enumerate}
\item Extraction of a convergent subsequence.

The total variation of $\rho_{\alpha, \Delta x}$ is bounded and we can thus extract a subsequence of $\rho_{\alpha, \Delta x}$ that converges weakly towards $\rho_{\alpha} \in \mathcal{M}_b([0,T]\times \mathbb{R})$. 

\item Modified equations and Taylor expansion. 

We write the modified equation satisfied by $\rho_{\alpha, \Delta x}$ in terms of distributions. Let us consider $\phi \in C^{\infty}_c([0,T] \times \mathbb{R})$. By using the dual product in sense of distribution $<\cdot,\cdot>$, one has 
\[
<\partial_t\rho_{\alpha, \Delta x}, \phi> + <{\widehat{a}_{\alpha, \Delta x}} \rho_{\alpha, \Delta x}, \partial_x \phi> = 0,
\]
where $ \widehat{a}_{\alpha, \Delta x}= \sum_{n=0}^{N_T} \sum_{J \in \mathbb{Z}} \widehat{a}_{\alpha, J}^n \mathbf{1}_{[ t^n, t^{n+1}[}(t) \delta_{x_J}(x)$.
Taylor expanding $\phi$ allows to rewrite this equation in terms of distributions. One then bounds the different terms by using a straightforward adaptation of \cite[Lemma 6.2]{vauch2species} to this model. 

\item Passing to the limit. 

We finaly use \cite[Lemma 3.2]{vauch2species} to pass to the limit. The limit $\rho_{\alpha}$ thus satisfies \eqref{rho_solPP}. By uniqueness from Theorem \ref{thePP}, $\rho_{\alpha}$ is the unique solution of  \eqref{ppsys}. 
\end{enumerate}

\end{proof}

\section{Numerical simulations in 2D}

We implemented the scheme in 2 dimensions for a square grid and potentials such as the Newtonian potential $\mathcal N(x)=|x|$ (pointy and $0$--convex) , or  $W=1-e^{-|x|}$ (pointy and $-1$--convex). The grid in all the simulations is composed of $50 \times 50$ points, with $\Delta t=0.005$ (according to the CFL condition \eqref{CFLPP}).

%
%
%
%
%
%
%
%
%

\paragraph{\textbf{Test 1.} Evading preys.}
In Figure \ref{prey2}, we present simulations made with a Dirac delta as initial data to model a single predator, and a uniform distribution for preys:
\begin{equation}
  \label{eq:Initcond}
  \rho_1^{in} = \delta_0(x), \qquad \rho_1^{in} = \bm 1_{\mathcal{B}(0.2, 0.1)}. 
\end{equation}
We use Newtonian potentials $W_1=W_2=0.1\mathcal N$, $K= \mathcal N$ for inter and intra-specific interactions, with a mobility $\beta =0.3$. 
At the beginning of the simulation, we observe that the predator is getting closer to the preys. When the group of preys is close, the preys create a circular pattern around the the predators in order to run away from him. 

\begin{figure}
\begin{center}
\begin{tabular}{ll}
    \includegraphics[scale=0.37]{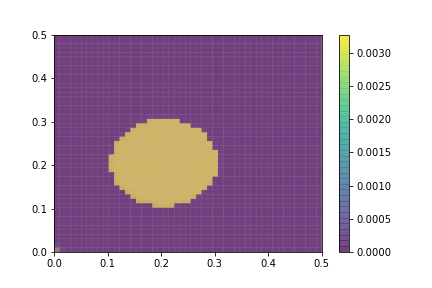}   &    \includegraphics[scale=0.37]{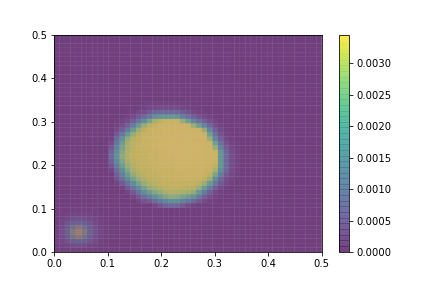}\\
  
    \includegraphics[scale=0.37]{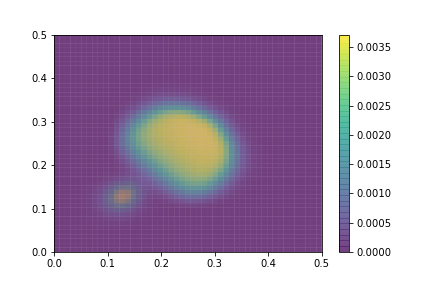}  &    

    \includegraphics[scale=0.37]{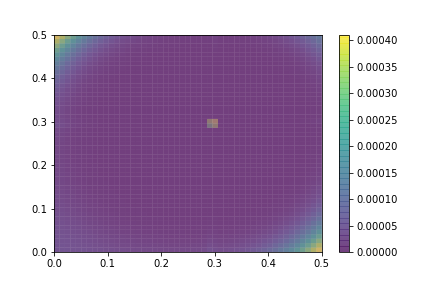}
\end{tabular}

\caption{\textbf{Test 1.} Newtonian potentials $W_1(x)=W_2(x)= 0.1|x|$,$K(x)=|x|$, $\beta=0.3.$ with a single predator at the origin, and an uniform distribution of preys as initial data.}
\label{prey2}
\end{center}
\end{figure}

\paragraph{Test 2. A more realistic potential for inter-specific interaction.}

In \cite{2species}, the authors introduced a potential $K$ that is more relevant in terms of modeling: 
\begin{equation}
    K(x) = 1-(|x|+1)e^{-|x|}.
\end{equation}
When the predator is far from the preys, the inter-specific interaction depends very weakly on the distance between preys and predator, and is almost constant. When the predator becomes closer to the preys, they become paralyzed, the potential being the close to $0$.  We performed simulations with an initial data given by \eqref{eq:Initcond} in Figure \ref{prey3}. We observe a similar behavior than in Figure \ref{prey2} in short time, but a convergence toward a single Dirac delta (The predator has gathered all the prey together) in large time. 

\begin{figure}
\begin{tabular}{ll}
    \includegraphics[scale=0.4]{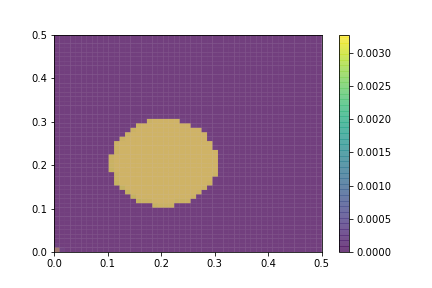}   &    \includegraphics[scale=0.4]{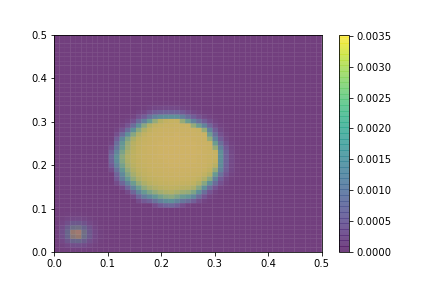}\\
  
    \includegraphics[scale=0.4]{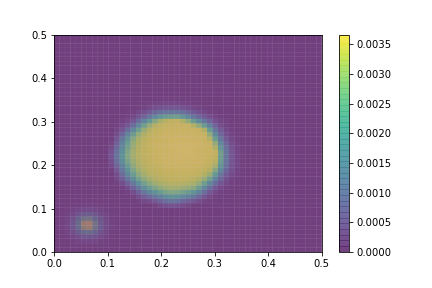}  &    \includegraphics[scale=0.4]{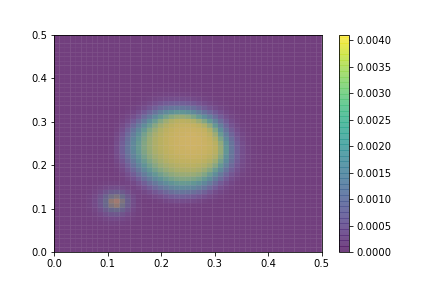}\\
   
    \includegraphics[scale=0.4]{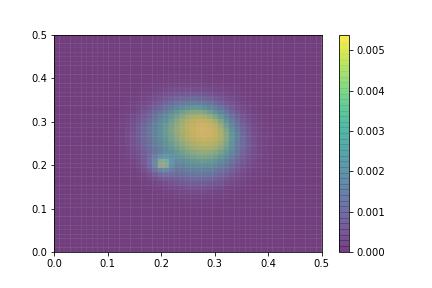}   &    \includegraphics[scale=0.4]{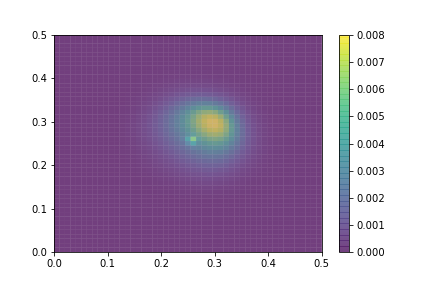}\\
    
    \includegraphics[scale=0.4]{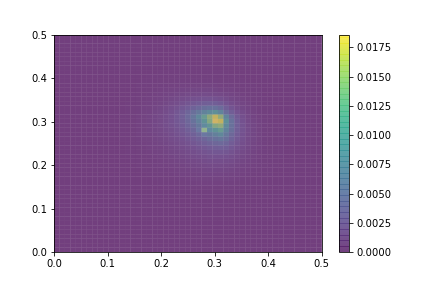}   &    \includegraphics[scale=0.4]{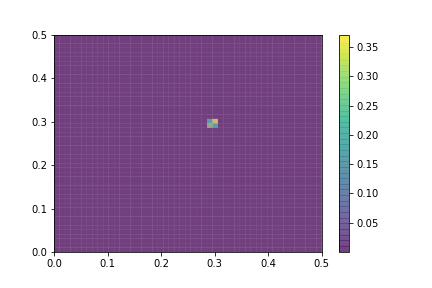}\\
     & 
\end{tabular}
\centering

\caption{\textbf{Test 2.} Newtonian potentials $W_1(x)=W_2(x)= 0.1 |x|$, ``fly-and-regroup'' potential $K(x)=1-(|x|+1)e^{-|x|} $, $\beta=0.3.$ with a single predator at the origin, and an uniform distribution of preys as initial data.}
\label{prey3}
\end{figure}

\begin{acknowledgement}
TR was partially funded by Labex CEMPI (ANR-11-LABX-0007-01) and ANR Project MoHyCon (ANR-17-CE40-0027-01).
\end{acknowledgement}

  \bibliographystyle{acm}
  \bibliography{Bib}
\end{document}